\tikzstyle{n}=[circle, draw, fill, minimum size=6, inner sep=0]
\tikzstyle{ext}=[circle, draw,  minimum size=3, inner sep=0]
\tikzstyle{int}=[circle, draw, fill, minimum size=3, inner sep=0]
\newtheorem{Thm}{Theorem}[section]
\theoremstyle{remark}
\newtheorem{Rem}[Thm]{Remark}
\theoremstyle{definition}
\title[On the compatibility between cup products...]{On the compatibility between cup products, the Alekseev--Torossian connection and the Kashiwara--Vergne conjecture}
\author{C.~A.~Rossi}
\address{MPIM Bonn, Vivatsgasse 7, 53111 Bonn (Germany)}
\begin{document}

%\texttt{\tableofcontents}

%\keywords{Koszul duality; Chevalley--Eilenberg complex; $A_\infty$-algebras and -bimodules; Deformation Quantization}
%\subjclass[2000]{}

\maketitle

\selectlanguage{english}
\begin{abstract}
For a finite-dimensional Lie algebra $\mathfrak g$ over a field $\mathbb K\supset \mathbb C$, we deduce from the compatibility between cup products~\cite[Section 8]{K} and from the main result of~\cite{Sh} an alternative way of re-writing Kontsevich product $\star$ on $\mathrm S(\mathfrak g)$ by means of the Alekseev--Torossian flat connection~\cite{AT}. 
We deduce a similar formula directly from the Kashiwara--Vergne conjecture~\cite{KV}.
\end{abstract}

% \selectlanguage{french}
% \begin{abstract}
% Soit $\mathfrak g$ une alg\`ebre de Lie de dimension finie sur un champ $\mathbb K\supset\mathbb C$.
% Du r\'esultat~\cite[Section 8]{K} sur la compatibilit\'e entre cup produits et du r\'esultat~\cite{Sh} de Shoikhet, on d\'eduit une \'ecriture alternative du produit-\'etoil\'e de Kontsevich sur $\mathrm S(\mathfrak g)$ en utilisant la connexion plate de
% Alekseev--Torossian~\cite{AT}. 
% En plus, on montre qu'une formule similaire peut \^etre d\'eduite aussi de la conjecture de Kashiwara-Vergne~\cite{KV}.
% \end{abstract}

%\setcounter{tocdepth}{1}

\selectlanguage{english}

\section{Introduction}\label{s-0}
For a general finite-dimensional Lie algebra $\mathfrak g$ over a field $\mathbb K\supset \mathbb C$, we consider the symmetric algebra $A=\mathrm S(\mathfrak g)$.

Deformation quantization {\em \`a la} Kontsevich~\cite{K} endows $A$ with an associative, non-commutative product $\star$: the universal property of the Universal Enveloping Algebra (shortly, from now on, UEA) $\mathrm U(\mathfrak g)$ and a degree argument imply that there is an isomorphism of associative algebras $\mathcal I$ from $(A,\star)$ to $(\mathrm U(\mathfrak g),\cdot)$.
In fact, the algebra isomorphism $\mathcal I$ has been characterized explicitly in~\cite{AST,CFR} as the composition of the Poincar\'e--Birkhoff--Witt (shortly, from now on, PBW) isomorphism (of vector spaces) with an invertible differential operator with constant coefficients and of infinite order associated to the well-known Duflo element $\sqrt{j(\bullet)}$ in the completed symmetric algebra $\widehat{\mathrm S}(\mathfrak g^*)$.

In this short note, we deduce a way of re-writing the product $\star$ on $A$ in terms of the Lie series $F$, $G$ appearing in the combinatorial Kashiwara--Vergne (shortly, from now on, KV) conjecture~\cite{KV}.
In fact, we prove a similar claim by deducing it from the compatibility between cup products for Kontsevich's formality quasi-isomorphism~\cite[Section 8]{K}.
Both claims are proved in a constructive way (see later on Formul\ae~\eqref{eq-AT-cup},~\eqref{eq-cup-comp} and~\eqref{eq-KV-cup}).

In Section~\ref{s-1}, we quickly review the main notation and conventions.

In Subsection~\ref{ss-2-1}, we recall the main features of Kontsevich's deformation quantization.

In Subsection~\ref{ss-2-2}, we re-prove in a different way the compatibility between cup products in $0$-th degree for the tangent cohomology in the lie algebra case in order to get Formul\ae~\eqref{eq-AT-cup} and~\eqref{eq-cup-comp}.

In Subsection~\ref{ss-2-3}, we re-find the Alekseev--Torossian (shortly, from now on, AT) connection~\cite{T,AT}.
\begin{Rem}\label{r-at}
The results of Subsections~\ref{ss-2-2},\ref{ss-2-3} were already somehow present in the seminal work~\cite{T} of C.~Torossian: we present them here in a different fashion, and the compatibility between cup products in $0$-th cohomology is proved in a different way than in~\cite[Subsection 8.2]{K}. 
\end{Rem}
\begin{Rem}\label{r-m}
The conjecture of Kashiwara--Ra\"is--Vergne (shortly, from now on, KRV) in the framework of point-supported distributions on Lie groups and Lie algebras has been proved in~\cite[section 4]{M} as a consequence of a result extending compatibility between cup products as in~\cite[Subsection 8.2]{K} to a compatibility between certain $A_\infty$-algebra structures on (twisted) poly-vector fields and poly-differential operators over $X=\mathbb K^d$ determined by a choice of a (twisted) poly-vector field of degree $1$ satisfying the Maurer--Cartan equation in the Schouten algebra of poly-vector fields over $X$.

However, the main result~\cite[Identity (55)]{M} does not hold true, because the crucial argument in~\cite[subsection 3.1]{M} is false: in fact,~\cite[Kontsevich's Vanishing Lemma 6.6]{K} does not hold true in the framework of the variations of the compactified configuration spaces considered in~\cite[Subsection 2.3]{M} except for the situation, where one considers only cup products.
This has been proved by direct computations either in~\cite[Example 2]{AT} or in~\cite[Subsection 3.4]{Alm}.
Furthermore, in the latter work, it has been proved explicitly~\cite[Section 4]{Alm} that~\cite[identity (55)]{M}, in the case of a formal Poisson structure $\hbar\pi$ on $X$, holds true only provided one introduces an ``exotic'' $A_\infty$-structure on the Schouten algebra of poly-vector fields over $X$.

Still, the proof of the KRV conjecture in~\cite[Section 4]{M} remains true, as it makes use only of compatibility between cup products, which in turn relies on the special case~\cite[Identity (56)]{M} for cup products of~\cite[Identity (55)]{M}, the latter dealing with the entire $A_\infty$-structure.

We provide here a different proof of the same result: the techniques presented here are more general than the ones adopted in~\cite{M} (although we apply them here only to prove compatibility between cup products) and imply the results of~\cite{Alm} as well, upon the choice of certain homology chains in the relevant compactified configuration spaces {\em \`a la} Kontsevich.
\end{Rem}
Finally, in Subsection~\ref{ss-2-4}, we consider the combinatorial KV conjecture and from it we deduce Formula~\eqref{eq-KV-cup}, which also yields compatibility between cup products in $0$-th cohomology. 

 \subsection*{Acknowledgments} We thank A.~Alekseev and G.~Felder for constructive criticism on the first drafts and for useful discussions, J.~L\"offler for many useful discussions and for the careful reading of a first version of this short note and M.~Vergne for having raised the main question originating this short note and many useful discussions and criticism.

\section{Notation and conventions}\label{s-1}
We consider a field $\mathbb K\supset\mathbb C$.

We denote by $\mathfrak g$ a finite-dimensional Lie algebra over $\mathbb K$ of dimension $d$; by $\{x_i\}$ we denote a $\mathbb K$-basis of $\mathfrak g$.
To $\mathfrak g$ we associate the linear variety $X=\mathfrak g^*$ over $\mathbb K$: the basis $\{x_i\}$ defines a set of global coordinates over $X$, and the Kirillov--Kostant Poisson bivector field $\pi$ on $X$ can be written as $\pi=f_{ij}^kx_k\partial_i\partial_j$, where we have omitted wedge product for the sake of simplicity, and $f_{ij}^k$ denote the structure constants of $\mathfrak g$ w.r.t.\ the basis $\{x_i\}$.

\section{Compatibility between cup products and the AT connection}\label{s-2}
In the present section, we consider a slightly different approach to the compatibility between cup products from~\cite[Subsection 8.2]{K} on the $0$-th cohomology.
We then specialize to the case of the Poisson variety $(X,\pi)$, where $X=\mathfrak g^*$ for $\mathfrak g$ as in Section~\ref{s-1}.

\subsection{Explicit formul\ae\ for Kontsevich's star product}\label{ss-2-1}
Let $X=\mathbb K^d$ and $\{x_i\}$ a system of global coordinates on $X$, for $\mathbb K$ as above.

For a pair $(n,m)$ of non-negative integers, by $\mathcal G_{n,m}$ we denote the set of admissible graphs of type $(n,m)$: an element $\Gamma$ of $\mathcal G_{n,m}$ is a directed graph with $n$, resp.\ $m$, vertices of the first, resp.\ second type, such that $i)$ there is no directed edge departing from any vertex of the second type and $ii)$ $\Gamma$ admits whether multiple edges nor short loops ({\em i.e.} given two distinct vertices $v_i$, $i=1,2$, of $\Gamma$ there is at most one directed edge from $v_1$ to $v_2$ and there is no directed edge, whose endpoint coincides with the initial point). 
By $E(\Gamma)$ we denote the set of edges of $\Gamma$ in $\mathcal G_{n,m}$.

We denote by $C_{n,m}^+$ the configuration space of $n$ points in the complex upper half-plane $\mathbb H^+$ and $m$ ordered points on the real axis $\mathbb R$ {\em modulo} the componentwise action of rescalings and real translations: provided $2n+m-2\geq 0$, $C_{n,m}^+$ is a smooth, oriented manifold of dimension $2n+m-2$.
We denote by $\mathcal C_{n,m}^+$ a suitable compactification {\em \`a la} Fulton--MacPherson introduced in~\cite[Section 5]{K}: $\mathcal C_{n,m}^+$ is a compact, oriented, smooth manifold with corners of dimension $2n+m-2$.
W
We denote by $\omega$ the closed, real-valued $1$-form
\[
\omega(z_1,z_2)=\frac{1}{2\pi}d\mathrm{arg}\!\left(\frac{z_1-z_2}{\overline z_1-z_2}\right),\ (z_1,z_2)\in (\mathbb H^+\sqcup \mathbb R)^2,\ z_1\neq z_2,
\]
where $\mathrm{arg}(\bullet)$ denotes the $[0,2\pi)$-valued argument function on $\mathbb C\smallsetminus\{0\}$ such that $\mathrm{arg}(i)=\pi/2$.
The main feature of $\omega$ is that it extends to a smooth, closed $1$-form on $\mathcal C_{2,0}^+$, such that $i)$ when the two arguments approach to each other in $\mathbb H^+$, $\omega$ equals the normalized volume form $d\varphi$ on $S^1$ and $ii)$ when the first argument approaches $\mathbb R$, $\omega$ vanishes.

We introduce $T_\mathrm{poly}(X)=A[\theta_1,\dots,\theta_d]$, $A=C^\infty(X)$, where $\{\theta_i\}$ denotes a set of graded variables of degree $1$, which commute with $A$ and anticommute with each other (one may think of $\theta_i$ as $\partial_i$ with a shifted degree).
We further consider the well-defined linear endomorphism $\tau$ of $T_\mathrm{poly}(X)^{\otimes 2}$ of degree $-1$ defined {\em via}
\[
\tau=\partial_{\theta_i}\otimes\partial_{x_i},
\]
where of course summation over repeated indices is understood.
We set $\omega_\tau=\omega\otimes \tau$.

To $\Gamma$ in $\mathcal G_{n,m}$ such that $|E(\Gamma)|=2n+m-2$, $\gamma_i$, $i=1,\dots,n$, elements of $T_\mathrm{poly}(X)$ and $a_j$, $j=1,\dots,m$, elements of $A$, we associate a map {\em via}
\begin{equation}\label{eq-form-coeff}
\left(\mathcal U_\Gamma(\gamma_1,\dots,\gamma_n)\right)(a_1\otimes\cdots\otimes a_m)=\mu_{m+n}\left(\int_{\mathcal C_{n,m}^+}\omega_{\tau,\Gamma}\left(\gamma_1\otimes\cdots\otimes\gamma_n\otimes a_1\otimes\cdots\otimes a_m\right)\right),\ \omega_{\tau,\Gamma}=\prod_{e\in E(\Gamma)}\omega_{\tau,e},\ \omega_{\tau,e}=\pi_e^*(\omega)\otimes\tau_e,
\end{equation}
$\tau_e$ being the graded endomorphism of $T_\mathrm{poly}(X)^{\otimes(m+n)}$ which acts as $\tau$ on the two factors of $T_\mathrm{poly}(X)$ corresponding to the initial and final point of the edge $e$, and $\mu_{m+n}$ denotes the multiplication map from $T_\mathrm{poly}(X)^{m+n}$ to $T_\mathrm{poly}(X)$, followed by the natural projection from $T_\mathrm{poly}(X)$ onto $A$ by setting $\theta_i=0$, $i=1,\dots,d$.
We may re-write~\eqref{eq-form-coeff} by splitting the form-part and the polydifferential operator part as
\[
\left(\mathcal U_\Gamma(\gamma_1,\dots,\gamma_n)\right)(a_1\otimes\cdots\otimes a_m)=\varpi_\Gamma(\mathcal B_\Gamma(\gamma_1,\dots,\gamma_n))(a_1,\dots,a_m),\ \varpi_\Gamma=\int_{\mathcal C_{n,m}^+}\omega_\Gamma.
\]

In~\cite[Theorem 6.4]{K}, the following theorem has been proved.
\begin{Thm}\label{t-star}
For a Poisson bivector field $\pi$ on $X$, and a formal parameter $\hbar$, the formula
\begin{equation}\label{eq-star}
f_1\star_{\hbar} f_2=\sum_{n\geq 0}\frac{\hbar^n}{n!}\sum_{\Gamma\in\mathcal G_{n,2}}(\mathcal U_\Gamma(\underset{n}{\underbrace{\pi,\dots,\pi}}))(f_1,f_2),\ f_i\in A,\ i=1,2,
\end{equation}
defines a $\mathbb K_{\hbar}=\mathbb K[\!\![\hbar]\!\!]$-linear, associative product on $A_{\hbar}=A[\!\![\hbar]\!\!]$. 
\end{Thm}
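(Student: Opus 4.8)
The plan is to verify $\mathbb K_\hbar$-bilinearity and well-definedness (both routine) and then to establish associativity, which is the substance of the statement. That $\star_\hbar$ is $\mathbb K_\hbar=\mathbb K[\!\![\hbar]\!\!]$-bilinear is immediate from~\eqref{eq-form-coeff}, each $\mathcal U_\Gamma(\pi,\dots,\pi)$ being a bidifferential operator and the whole expression being assembled $\hbar$-linearly. For well-definedness one notes that, for fixed $n$, the form $\omega_\Gamma$ is of top degree on $\mathcal C_{n,2}^+$ only when $|E(\Gamma)|=2n$, so that the weight $\varpi_\Gamma=\int_{\mathcal C_{n,2}^+}\omega_\Gamma$ vanishes otherwise; since there are only finitely many $\Gamma\in\mathcal G_{n,2}$ with $2n$ edges, each coefficient of $\hbar^n$ is a well-defined bidifferential operator on $A$.

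To prove associativity I would reformulate it homologically. Writing $\star_\hbar=\mu_0+B$ with $\mu_0$ the pointwise product and $B=\sum_{n\ge 1}\hbar^n B_n$, $B_n=\tfrac1{n!}\sum_{\Gamma\in\mathcal G_{n,2}}\varpi_\Gamma\,\mathcal B_\Gamma(\pi,\dots,\pi)$, associativity of $\star_\hbar$ is equivalent to the Maurer--Cartan equation
\[
d_{\mathrm H}B+\tfrac12[B,B]_{\mathrm G}=0
\]
in the Hochschild complex $D_{\mathrm{poly}}(X)$ of polydifferential operators, where $d_{\mathrm H}=[\mu_0,\cdot]_{\mathrm G}$ is the Hochschild differential and $[\cdot,\cdot]_{\mathrm G}$ the Gerstenhaber bracket. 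Extracting the coefficient of $\hbar^n$, this becomes a quadratic identity among the weights $\varpi_\Gamma$, namely an equality of tridifferential operators built from graphs in $\mathcal G_{n,3}$ with $2n$ edges, acting on $f_1\otimes f_2\otimes f_3$.

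The key step is to recognize this identity as a consequence of Stokes' theorem on the compactified configuration space $\mathcal C_{n,3}^+$, of dimension $2n+1$. For $\Gamma\in\mathcal G_{n,3}$ with $|E(\Gamma)|=2n$ the form $\omega_\Gamma=\prod_{e}\pi_e^*\omega$ has degree $2n=\dim\mathcal C_{n,3}^+-1$, and since $\omega$ is closed we have $\d\omega_\Gamma=0$; hence
\[
0=\int_{\mathcal C_{n,3}^+}\d\omega_\Gamma=\int_{\partial\mathcal C_{n,3}^+}\omega_\Gamma .
\]
The codimension-one boundary $\partial\mathcal C_{n,3}^+$ is a union of two families of strata: those in which a subset of $k\ge 2$ bulk points collides in $\mathbb H^+$, isomorphic to $\mathcal C_k\times\mathcal C_{n-k+1,3}^+$ where $\mathcal C_k$ is the compactified space of $k$ points in $\mathbb C$ modulo rescalings and translations, and those in which a consecutive block of the real points together with some bulk points approaches $\mathbb R$, isomorphic to a product $\mathcal C_{k,\ell}^+\times\mathcal C_{n-k,3-\ell+1}^+$. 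On each stratum $\omega_\Gamma$ factorizes as an exterior product of the corresponding forms on the two factors, so the boundary integral breaks up into a sum of products of lower weights.

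The heart of the argument---and the step I expect to be the main obstacle---is to show that almost all of these contributions vanish, leaving exactly the terms of the Maurer--Cartan relation. Here I would invoke the two defining properties of $\omega$: property $(i)$, which identifies $\omega$ with the normalized volume form $\d\varphi$ on the circle $S^1\cong\mathcal C_2$ arising when two bulk points collide, and property $(ii)$, that $\omega$ vanishes as its first argument reaches $\mathbb R$, the latter killing the superfluous real-axis strata. The bulk strata with $k\ge 3$ are disposed of by Kontsevich's Vanishing Lemma~\cite[Lemma 6.6]{K}, whose delicacy---and whose failure in more general settings---is precisely the point emphasized in Remark~\ref{r-m}. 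After these cancellations, the surviving real-axis strata reassemble into the order-$\hbar^n$ part of $d_{\mathrm H}B+\tfrac12[B,B]_{\mathrm G}$, the collapse of a consecutive pair of the three real points producing the iterated products $(f_1\star_\hbar f_2)\star_\hbar f_3$ and $f_1\star_\hbar(f_2\star_\hbar f_3)$, while the single bulk stratum $k=2$ contributes, via property $(i)$, the Schouten--Nijenhuis bracket $[\pi,\pi]_{\mathrm{SN}}$ on the remaining factor $\mathcal C_{n-1,3}^+$. Since $\pi$ is Poisson, $[\pi,\pi]_{\mathrm{SN}}=0$, so this last term drops out and the vanishing of the boundary integral becomes exactly the desired Maurer--Cartan equation, hence the associativity of $\star_\hbar$.
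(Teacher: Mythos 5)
The paper itself gives no proof of this theorem: it is quoted verbatim from Kontsevich's formality paper \cite{K}*{Theorem 6.4}, and your argument is precisely the one that citation points to --- the reformulation of associativity as a Maurer--Cartan identity in the Hochschild complex, Stokes' theorem applied to the closed degree-$2n$ forms $\omega_\Gamma$ on the $(2n+1)$-dimensional space $\mathcal C_{n,3}^+$, Kontsevich's Vanishing Lemma 6.6 for collisions of $k\geq 3$ points in $\mathbb H^+$, the Jacobi identity $[\pi,\pi]=0$ disposing of the $k=2$ collisions, and the surviving real-axis strata reassembling into the associativity relation. Your sketch is correct and essentially identical to the cited proof; the only glossed point is why clusters containing $0$, $1$ or all $3$ real points drop out, which follows because every bulk vertex carries the bivector $\pi$ and hence has exactly two outgoing edges, so that property $(ii)$ of $\omega$ together with the dimension count forces a contributing cluster to contain exactly two consecutive real points.
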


\subsection{The $1$-form governing the compatibility between cup products}\label{ss-2-2}
We now consider $\mathfrak g$ as in Section~\ref{s-1}, to which we associate the Poisson variety $(X=\mathfrak g^*,\pi)$.
Observe that the commutative algebra $\mathbb K[X]$ of regular functions on $X$ identifies with $A=\mathrm S(\mathfrak g)$.

Since $\pi$ is linear, Formula~\eqref{eq-star} restricts to $A_{\hbar}$ and moreover the $\hbar$-dependence is polynomial: we may thus safely set $\hbar=1$ and consider the associative algebra $(A,\star)$.

For a non-negative integer $n$, let us consider the projection $\pi_{n,2}$ from $C_{n+2,0}^+$ onto $C_{2,0}^+$ which forgets all points in $\mathbb H^+$ except the last two: it extends smoothly to a projection from $\mathcal C_{n+2,0}^+$ onto $\mathcal C_{2,0}^+$, which we denote by the same symbol.
It is clear that $\pi_{n,2}$ defines a fibration onto $\mathcal C_{2,0}^+$, whose typical fiber is a smooth, oriented manifold with corners of dimension $2n$.

To $\Gamma$ in $\mathcal G_{n+2,0}$ such that $|E(\Gamma)|=2n$, we associate a smooth $0$-form on $C_{2,0}^+$ with values in the bidifferential operators on $A$ defined as 
\begin{equation}\label{eq-tang-coeff}
\mathcal T_\Gamma^\pi(f_1,f_2)=\mu_{n+2}(\pi_{n,2,*}(\omega_{\tau,\Gamma}(\underset{n}{\underbrace{\pi\otimes\cdots\otimes\pi}}\otimes f_1\otimes f_2)))=\widehat\varpi_\Gamma(\mathcal B_\Gamma(\underset{n}{\underbrace{\pi,\dots,\pi}}))(f_1,f_2),\ \widehat\varpi_\Gamma=\pi_{n,2,*}(\omega_\Gamma),
\end{equation}
where $\pi_{n,2,*}$ denotes the integration along the fiber of the operator-valued form $\omega_{\tau,\Gamma}$ w.r.t.\ the projection $\pi_{n,2}$.
% \begin{Rem}\label{r-tang}
% % Observe that~\eqref{eq-tang-coeff} is well-defined, as $\omega$ extends to $\mathcal C_{2,0}^+$.
% % If $\Gamma$ contains an edge connecting the last two vertices,~\eqref{eq-tang-coeff} vanishes by dimensional reasons.
% % Such an edge would correspond to a $1$-form on the base manifold of $\pi_{n,2}$, and the rest of integrand would have degree strictly less than the dimension of the fiber.
% No arrow may depart from any of the last two vertices, and exactly two arrows depart from any vertex different from the last two: thus, we may identify graphs $\Gamma$ in $\mathcal G_{n+2,0}$ as in~\eqref{eq-tang-coeff} with graphs in $\mathcal G_{n,2}$ as in~\eqref{eq-star}.
% \end{Rem}
We finally set
\begin{equation}\label{eq-tang-func}
\mathcal T^\pi(f_1,f_2)=\sum_{n\geq 0}\frac{1}{n!}\sum_{\Gamma\in \mathcal G_{n+2,0}\atop |E(\Gamma)|=2n}\mathcal T_\Gamma^\pi(f_1,f_2),\ f_i\in A,\ i=1,2.
\end{equation}
Formula~\eqref{eq-tang-func} yields a well-defined smooth function on $C_{2,0}^+$ with values in the bidifferential operators on $A$.
\begin{Thm}\label{t-AT-cup}
There exist smooth $1$-forms $\Omega_i^\pi$ on $C_{2,0}^+$, $i=1,2$, with values in $\mathfrak g\otimes \widehat{\mathrm S}(\mathfrak g^*)^{\otimes 2}$, such that the following identity holds true:
\begin{equation}\label{eq-AT-cup}
d(\mathcal T^\pi(f_1,f_2))=\mathcal T^\pi(\Omega_1^\pi([\pi,f_1],f_2))+\mathcal T^\pi(\Omega_2^\pi(f_1,[\pi,f_2])),\ f_i\in A,\ i=1,2.
\end{equation}   
\end{Thm}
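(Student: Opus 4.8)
The plan is to obtain~\eqref{eq-AT-cup} as a fiber-integration Stokes identity. Since $\mathcal T^\pi(f_1,f_2)$ is assembled from the push-forwards $\pi_{n,2,*}(\omega_{\tau,\Gamma}(\cdots))$ and every propagator $\omega$ is closed, the form-part $\omega_\Gamma$ is closed; hence the total differential of each integrand vanishes, and the generalized Stokes formula for integration along the fiber of $\pi_{n,2}$ reduces the computation to a boundary term:
\[
d\big(\mathcal T_\Gamma^\pi(f_1,f_2)\big)=\pm\,(\partial\pi_{n,2})_*\big(\omega_{\tau,\Gamma}(\underset{n}{\underbrace{\pi\otimes\cdots\otimes\pi}}\otimes f_1\otimes f_2)\big),
\]
the right-hand side being the integral of $\omega_{\tau,\Gamma}$ over the codimension-$1$ boundary of the typical fiber. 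First I would fix the orientation and the attendant sign, the fiber having dimension $2n$.

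The core of the argument is the boundary stratification \`a la Kontsevich and the elimination of all but two families of faces. The codimension-$1$ fiber-boundary faces are of three kinds: (A) a subset of $k\geq 2$ of the $\pi$-vertices collapses together in $\mathbb H^+$; (B), resp.\ (C), a subset of $k\geq 1$ of the $\pi$-vertices collapses onto the vertex carrying $f_1$, resp.\ $f_2$; and (E) a subset of $\pi$-vertices approaches the real axis. Faces of type (E) contribute nothing, since $\omega$ vanishes when its source tends to $\mathbb R$ while every $\pi$-vertex emits two edges. On a face of type (A) the bubble integral reproduces the Schouten self-bracket of $\pi$: for $k=2$ it equals $[\pi,\pi]=0$ because $\pi$ is Poisson, and for $k\geq 3$ it vanishes by Kontsevich's Vanishing Lemma~\cite[Lemma~6.6]{K}. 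The same lemma, applied to the $(k{+}1)$-point bubble, annihilates the faces of type (B) and (C) as soon as $k\geq 2$.

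What remains are the faces (B) and (C) with $k=1$, where a single $\pi$-vertex collapses onto the $f_1$- (resp.\ $f_2$-) vertex. The one-dimensional bubble then carries exactly one propagator, whose integral contracts one $\theta$-leg of $\pi$ against $f_1$ through the endomorphism $\tau=\partial_{\theta_i}\otimes\partial_{x_i}$, producing the Hamiltonian $1$-vector $[\pi,f_1]$ seated at the $f_1$-vertex; the surviving $\theta$-leg, together with the edges inherited from the remaining $\pi$-vertices, turns the reduced configuration---after integration along its own fiber---back into $\mathcal T^\pi$ evaluated on the collapsed datum. Resumming over all admissible $\Gamma$ and over $n$, the direct propagators issuing from the collapsed vertex assemble into a $1$-form on $C_{2,0}^+$ whose $\mathfrak g$-index pairs with the $\mathfrak g^*$-direction of $[\pi,f_1]$ and whose neighboring legs furnish the constant-coefficient bidifferential operators in $\widehat{\mathrm S}(\mathfrak g^*)^{\otimes 2}$; this datum is precisely what I would record as $\Omega_1^\pi$ (and symmetrically $\Omega_2^\pi$), giving~\eqref{eq-AT-cup}.

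The hard part will be the two vanishing statements above, and in particular the appeal to Kontsevich's Vanishing Lemma. As emphasized in Remark~\ref{r-m}, that lemma is genuinely false outside the cup-product situation---it fails for the variations of configuration spaces underlying the full $A_\infty$-structure---so the crux is to check that every discarded face of types (A) and (B)/(C) falls squarely within its cup-product hypotheses. A secondary, purely bookkeeping, difficulty is the clean factorization of $\omega_{\tau,\Gamma}$ near each surviving face, needed to read off $\Omega_i^\pi$ together with its asserted target $\mathfrak g\otimes\widehat{\mathrm S}(\mathfrak g^*)^{\otimes 2}$.
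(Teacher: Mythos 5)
Your Stokes-plus-boundary-classification skeleton is the same as the paper's: the generalized Stokes theorem for integration along the fiber of $\pi_{n,2}$, vanishing of the real-axis faces, Kontsevich's Vanishing Lemma for clusters of three or more points collapsing in $\mathbb H^+$, the Jacobi identity (via $[\pi,\pi]=0$) for internal two-point collapses, and survival only of the single-vertex collapses onto the two marked points. Your worry about the legitimacy of the Vanishing Lemma is resolved exactly as you hope: by Fubini's Theorem each discarded cluster contributes the honest integral of a product of angle forms over the full compact space $\mathcal C_A$ (the cluster's relative coordinates are pure fiber directions), which is precisely the cup-product situation in which the lemma is valid; this is how the paper argues. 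Two smaller inaccuracies in your boundary analysis: on a real-axis face the propagators of edges \emph{internal} to the collapsing cluster do not vanish (they restrict to the angle forms of the cluster), so the vanishing there combines the vanishing of edges leaving the cluster with a degree count (form degree $2|A|$ against $\dim C_{A,0}^+=2|A|-2$); and your list omits the fiber-boundary stratum where the two marked points together with some $\pi$-vertices approach $\mathbb R$ (equivalently, the complementary vertices escape to infinity), which the paper handles by the same argument applied to the complementary subset.

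The genuine gap is the step you set aside as ``purely bookkeeping.'' The boundary analysis by itself yields only
\[
d(\mathcal T^\pi(f_1,f_2))=\sum_{n\geq 0}\frac{1}{n!}\sum_{\Gamma\in\mathcal G_{n+2,0},\ |E(\Gamma)|=2n+1}\widehat\varpi_\Gamma(\mathcal B_\Gamma(\pi,\dots,\pi))([\pi,f_1],f_2)+\big(\text{the analogous term with }[\pi,f_2]\big),
\]
a sum over graphs with one extra edge whose fiber-integral weights do not a priori factor. The theorem asserts strictly more: that this can be rewritten as $\mathcal T^\pi(\Omega_1^\pi([\pi,f_1],f_2))+\mathcal T^\pi(\Omega_2^\pi(f_1,[\pi,f_2]))$ with the \emph{same} $\mathcal T^\pi$ on the outside and with $\Omega_i^\pi$ taking values in $\mathfrak g\otimes \widehat{\mathrm S}(\mathfrak g^*)^{\otimes 2}$, i.e.\ in constant-coefficient operators. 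This is exactly where the paper uses the linearity of $\pi$ on $X=\mathfrak g^*$ --- a hypothesis your argument never invokes. Linearity forces each internal vertex to be the endpoint of at most one edge, so, adapting~\cite[Subsubsections 3.1.2--3.1.4]{AST}, every surviving graph decomposes uniquely into simple components of three types: rooted bivalent trees, wheel-like graphs, and exactly one tree carrying the extra edge from the marked vertex back to its root. That unique third component, whose weight is a genuine smooth $1$-form on $C_{2,0}^+$, is what defines $\Omega_i^\pi$ (Formul\ae~\eqref{eq-AT-1} and~\eqref{eq-AT-2}); the remaining components resum to $\mathcal T^\pi$; and Fubini's Theorem makes the weights multiplicative over this decomposition. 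For a non-linear Poisson structure neither the constant-coefficient target nor the factorization holds, so this step is not bookkeeping: it is where the specific form of~\eqref{eq-AT-cup}, and hence the theorem as stated, is actually proved.
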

\begin{proof}
First of all, for $\Gamma$ in $\mathcal G_{n+2,0}$ such that $|E(\Gamma)|=2n$, $n\geq 1$, let us compute
\[
d(\mathcal T_\Gamma^\pi(f_1,f_2))=d\widehat\varpi_\Gamma (\mathcal B_\Gamma(\underset{n}{\underbrace{\pi,\dots,\pi}}))(f_1,f_2)=\pi_{n,2,*}^\partial(\omega_\Gamma)(\mathcal B_\Gamma(\underset{n}{\underbrace{\pi,\dots,\pi}}))(f_1,f_2),
\]
where the second equality follows by means of the generalized Stokes Theorem for integration along the fiber, and $\pi_{n,2,*}^\partial$ denotes integration along the boundary of the compactification of the typical fiber of the projection $\pi_{n,2}$.

The boundary strata of codimension $1$ of the compactification of the typical fiber of $\pi_{n,2}$ can be deduced from the boundary strata of codimension $1$ of $\mathcal C_{n+2,0}^+$: 
\begin{itemize}
\item[$i)$] there is a subset $A$ of $[n+2]=\{1,\dots,n+2\}$, $1\leq |A|\leq n$ which contains either $n+1$ or $n+2$, such that points in $\mathbb H^+$ labeled by $A$ collapse either to the $n+1$-st or $n+2$-nd point in $\mathbb H^+$;
\item[$ii)$] there is a subset $A$ of $[n+2]$, $2\leq |A|\leq n$, $n+1,n+2\notin A$, such that points in $\mathbb H^+$ labeled by $A$ collapse to a single point in $\mathbb H^+$, distinct from the last two points;
\item[$iii)$] there is a subset $A$ of $[n+2]$, which either contains both $n+1$, $n+2$ or contains neither of them, such that the points in $\mathbb H^+$ labeled by $A$ approach $\mathbb R$.
\end{itemize}
% Observe that $C_{2,0}^+\cong \mathbb H^+\smallsetminus \{i\}$ by fixing to $i$ {\em e.g.} the second point in $\mathbb H^+$; similarly, we may fix to $i$ the last point on $C_{n+2,0}^+$, and the projection $\pi_{n,2}$ is compatible with this procedure.
% Then, boundary strata of type $iii)$ are described as follows: if $A$ does not contain $n+1$, $n+2$, the points in $\mathbb H^+$ labeled by $A$ approach $\mathbb R$, while, if $A$ contains both $n+1$, $n+2$, the points in $\mathbb H^+$ labeled by $A^c=[n]\smallsetminus A$ tend to infinity in $\mathbb H^+$.

For $\Gamma$ as above, we denote by $\Gamma_A$ the subgraph of $\Gamma$, whose edges have both endpoints labeled by $A$, and by $\Gamma/\Gamma_A$ the corresponding quotient graph obtained by shrinking $\Gamma_A$ in $\Gamma$ to a single vertex.

The boundary strata of type $iii)$ yield trivial contributions.
Namely, let us consider first a subset $A$ of $[n+2]$, such that $n+1, n+2\notin A$: Fubini's Theorem implies that
\[
\pi_{n,2,*}^{\partial,A}(\omega_\Gamma)\varpropto\int_{\mathcal C_{A,0}^+}\omega_{\Gamma_A},
\]
and the aforementioned properties of $\omega$ imply that the form degree of $\omega_{\Gamma_A}$ equals $2|A|$, while the dimension of $C_{A,0}^+$ equals $2|A|-2$.
If $A$ contains both $n+1$, $n+2$, we may repeat the previous arguments {\em verbatim} by replacing $A$ by $A^c$.

Let us consider a general boundary stratum of type $ii)$: Fubini's Theorem and the properties of $\omega$ imply
\[
\pi_{n,2,*}^{\partial,A}(\omega_{\Gamma})\varpropto \int_{\mathcal C_A}\omega_{\Gamma_A},
\]
where $\mathcal C_A$ is the compactified configuration space of $|A|$ points in $\mathbb C$ {\em modulo} rescalings and complex translations; by abuse of notations, we have denoted by $\omega_{\Gamma_A}$ a product of $1$-forms $d\arg(z_i-z_j)$, $i$, $j$ in $A$, on $\mathcal C_A$.
If $|A|\geq 3$, the above integral on the right-hand side vanishes by~\cite[Lemma 6.6]{K}.
Thus, it remains to consider the case $|A|=2$.
If no edge connects the two vertices labeled by $A$, there is nothing to integrate over $\mathcal C_2=S^1$, while, if there is a cycle between the two vertices, $\omega_{\Gamma_A}$ is the square of a $1$-form, hence both contributions vanish.
We thus assume that there is a single edge connecting the two vertices labeled by $A$, in which case Fubini's Theorem together with the properties of $\omega$ when its arguments collapse in $\mathbb H^+$ yields 
\[
\pi_{n,2,*}^{\partial,A}(\omega_{\Gamma})=\pi_{n-1,2,*}(\omega_{\Gamma/\Gamma_A}).
\]
Observe that $\Gamma/\Gamma_A$ belongs to $\mathcal G_{n+1,0}$, no edge departs from $n+1$, $n+2$ and all other vertices are bivalent except one, which is trivalent (here, the valence of a vertex is the number of outgoing edges from the said vertex).

Finally, let us consider a boundary stratum of type $i)$, labeled by a subset $A$ of $[n+2]$, such that $n+1\in A$, $n+2\notin A$.
Assume first $|A|\geq 2$: then, in a way similar to the analysis of a boundary stratum of type $ii)$, we find
\[
\pi_{n,2,*}^{\partial,A}(\omega_{\Gamma})\varpropto \int_{\mathcal C_{A\sqcup\{n+1\}}}\omega_{\Gamma_A}=0
\]
by~\cite[Lemma 6.6]{K}, as $|A|\geq 2$.
It remains to consider the case $|A|=1$.
As before, we may safely assume that $\Gamma_A$ consists of a single edge with endpoint $n+1$ and initial point different from $n+2$, whence
\[
\pi_{n,2,*}^{\partial,A}(\omega_{\Gamma})=\pi_{n-1,2,*}(\omega_{\Gamma/\Gamma_A}).
\]
Due modifications of the previous arguments yield a similar formula in the situation $n+1\notin A$, $n+2\in A$. 
Here, $\Gamma/\Gamma_A$, if $n+1$ is in $A$, belongs to $\mathcal G_{n+1,0}$, exactly one edge departs from $n+1$, no edge departs from $n+2$, and all other vertices are bivalent; when $n+2$ belongs to $A$, $\Gamma/\Gamma_A$ is described in a similar way by switching $n+1$ and $n+2$.

The previous computations yield
\begin{equation}\label{eq-cup-form}
\begin{aligned}
d(\mathcal T^\pi(f_1,f_2))&=\sum_{n\geq 1}\frac{1}{n!}\sum_{\Gamma\in\mathcal G_{n+2,0}\atop |E(\Gamma)|=2n}\sum_{A\subseteq [n+2],\ |A|=2\atop n+1\in A,\ n+2\notin A} \widehat\varpi_{\Gamma/\Gamma_A}(\mathcal B_\Gamma(\underset{n}{\underbrace{\pi,\dots,\pi}}))(f_1,f_2)+\sum_{n\geq 1}\frac{1}{n!}\sum_{\Gamma\in\mathcal G_{n+2,0}\atop |E(\Gamma)|=2n}\sum_{A\subseteq [n+2],\ |A|=2\atop n+1\notin A,\ n+2\in A} \widehat\varpi_{\Gamma/\Gamma_A}(\mathcal B_\Gamma(\underset{n}{\underbrace{\pi,\dots,\pi}}))(f_1,f_2)+\\
&\phantom{=}+\sum_{n\geq 1}\frac{1}{n!}\sum_{\Gamma\in\mathcal G_{n+2,0}\atop |E(\Gamma)|=2n}\sum_{A\subseteq [n+2],\ |A|=2\atop n+1,n+2\notin A} \widehat\varpi_{\Gamma/\Gamma_A}(\mathcal B_\Gamma(\underset{n}{\underbrace{\pi,\dots,\pi}}))(f_1,f_2)=\\
&=\sum_{n\geq 0}\frac{1}{n!}\sum_{\Gamma\in\mathcal G_{n+2,0}\atop |E(\Gamma)|=2n+1}\widehat{\varpi}_\Gamma(\mathcal B_\Gamma(\underset{n}{\underbrace{\pi,\dots,\pi}}))([\pi,f_1],f_2)+\sum_{n\geq 0}\frac{1}{n!}\sum_{\Gamma\in\mathcal G_{n+2,0}\atop |E(\Gamma)|=2n+1}\widehat\varpi_\Gamma(\mathcal B_\Gamma(\underset{n}{\underbrace{\pi,\dots,\pi}}))(f_1,[\pi,f_2])+\\
&\phantom{=}+\sum_{n\geq 0}\frac{1}{n!}\sum_{\Gamma\in\mathcal G_{n+2,0}\atop |E(\Gamma)|=2n+1}\widehat\varpi_\Gamma(\mathcal B_\Gamma([\pi,\pi],\underset{n-1}{\underbrace{\pi,\dots,\pi}}))(f_1,f_2),
\end{aligned}
\end{equation}
recalling the explicit shape of the quotient subgraph $\Gamma/\Gamma_A$ in the three previous cases and using Leibniz' rule to re-write the sums over $A$ in the bidifferential operators; $[\pi,\pi]$ denotes the trivector field on $x$, whose components are given by the sum over the cyclic permutations of $\{j,k,l\}$ in $\pi_{ij}\partial_i\pi_{kl}$.
% We recall the previous discussion about the shape of the quotient graph $\Gamma^A$ in order to understand the shape of the admissible graphs in the interior sums in all three terms in the last expression of~\eqref{eq-cup-form}.
% In particular, $\widehat\varpi_\Gamma$ in the last expression in~\eqref{eq-cup-form} is the integral along the fiber of $\pi_{n,2}$ of a form of degree $2n+1$: such an integral exists, because $\omega$ extends to $\mathcal C_{2,0}^+$, and $\varpi_\Gamma$ yields a smooth $1$-form on $C_{2,0}^+$.

Observe that the third term in the final expression of~\eqref{eq-cup-form} vanishes because of the Jacobi identity, hence only the first and second term matter in our discussion.
The linearity of $\pi$ on $X=\mathfrak g^*$ permits to re-write both $1$-forms in a more elegant way.

If we consider a general graph $\Gamma$ in $\mathcal G_{n+2,0}$ as in the first term on the right-hand side of~\eqref{eq-cup-form}, any bivalent vertex different from $n+1$, $n+2$ may be the endpoint of at most one arrow.
Thus, by slightly adapting the arguments of~\cite[Subsections 3.1.2-3.1.4]{AST}, $\Gamma$ factorizes uniquely into the union of its simple components~\footnote{An element $\Gamma$ of $\mathcal G_{n,2}$ is simple, if the graph obtained from $\Gamma$ by removing all arrows connecting to the vertices of the second type is connected. In the present situation, we may regard $\Gamma$ in $\mathcal G_{n+2,0}$ as an element of $\mathcal G_{n,2}$ by interpreting the last two vertices of the first type as vertices of the second type.}: the main novelty is that in the present situation, there are three types of simple components, namely
\begin{itemize}
\item[$i)$] rooted, bivalent trees with $2$ leaves,
\item[$ii)$] wheel-like graphs with $2$ leaves, whose legs may be attached to rooted, bivalent trees,
\item[$iii)$] rooted, bivalent trees with $2$ leaves and an edge connecting either one of the two leaves to the root. 
\end{itemize} 
In Figure~\ref{fig-1} are depicted three types of simple graphs as in $i)$-$iii)$: the two gray-shaded vertices of the first type are called external, while the remaining vertices of the first type are called internal.
Observe that the external vertices are only endpoints of edges, while the internal vertices have exactly two outgoing edges and one ingoing edge, except the root in $i)$.  
\begin{figure}
\centering
\begin{tikzpicture}[>=latex]
\tikzstyle{k-int}=[draw,fill=gray!40,circle,inner sep=0pt,minimum size=2.5mm]
\tikzstyle{n-int}=[draw,fill=black,circle,inner sep=0pt,minimum size=2.5mm]
\tikzstyle{ext}=[draw,fill=white,circle,inner sep=0pt,minimum size=2.5mm]

\begin{scope}[scale=0.75]
\node[k-int] (1) at (0,0) {};
\node[k-int] (2) at (2,0) {};
\node[n-int] (v1) at (1,1) {};
\node[n-int] (v2) at (0,2) {};
\node[n-int] (v3) at (2,2) {};
\node[n-int] (r) at (1,3) {};
\draw[thick,->] (v1) to (1);
\draw[thick,->] (v1) to (2);
\draw[thick,->] (v2) to (1);
\draw[thick,->] (v2) to (v1);
\draw[thick,->] (v3) to (2);
\draw[thick,->] (v3) to (v2);
\draw[thick,->] (r) to (v2);
\draw[thick,->] (r) to (v3);
\end{scope}

\begin{scope}[scale=0.75,shift={(5,0)}]
\node[k-int] (1) at (0,0) {};
\node[k-int] (2) at (2,0) {};
\begin{scope}[shift={(1,2)}]
\node[n-int] (s1) at (0:1) {};
\node[n-int] (s2) at (90:1) {};
\node[n-int] (s3) at (180:1) {};
\node[n-int] (s4) at (270:1) {};
\end{scope} 
\node[n-int] (v) at (0,1) {};
\draw[thick,->] (v) to (1);
\draw[thick,->] (v) to (2);
\draw[thick,->] (s1) to (s4);
\draw[thick,->] (s4) to (s3);
\draw[thick,->] (s3) to (s2);
\draw[thick,->] (s2) to (s1);
\draw[thick,->] (s1) to (2);
\draw[thick,->] (s2) to (2);
\draw[thick,->] (s3) to (v);
\draw[thick,->] (s4) to (1);
\end{scope}

\begin{scope}[scale=0.75,shift={(10,0)}]
\node[k-int] (1) at (0,0) {};
\node[k-int] (2) at (2,0) {};
\node[n-int] (v1) at (1,1) {};
\node[n-int] (v2) at (0,2) {};
\node[n-int] (v3) at (2,2) {};
\node[n-int] (r) at (1,3) {};
\draw[thick,->] (v1) to (1);
\draw[thick,->] (v1) to (2);
\draw[thick,->] (v2) to (1);
\draw[thick,->] (v2) to (v1);
\draw[thick,->] (v3) to (2);
\draw[thick,->] (v3) to (v2);
\draw[thick,->] (r) to (v2);
\draw[thick,->] (r) to (v3);
\draw[thick,->] (1) [out=145, in=180] to (r);
\end{scope}
\node at (0.75,-0.5) {$i)$};
\node at (4.5,-0.5) {$ii)$};
\node at (8.25,-0.5) {$iii)$};

\end{tikzpicture}
\caption{\label{fig-1} \selectlanguage{english} $i)$ A rooted, bivalent tree in $\mathcal G_{6,0}$, $ii)$ a wheel-like graph with a bivalent root tree in $\mathcal G_{7,0}$, $iii)$ a rooted, bivalent tree in $\mathcal G_{6,0}$ with an edge connecting the first external vertex to the root.
% /\selectlanguage{french} $i)$ Un arbre bivalent enracin\'e dans $\mathcal G_{6,0}$, $ii)$ un graphe de type roue dans $\mathcal G_{7,0}$ avec un arbre bivalent enracin\'e y-attach\'e, $iii)$ un arbre bivalent dans $\mathfrak G_{6,0}$ avec une ar\^ete joignante le premier sommet exterieur \`a la racine.
}
\end{figure}
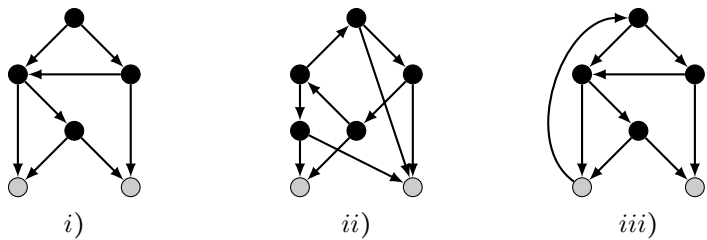
% \begin{center}
% \resizebox{0.45 \textwidth}{!}{\input{simple-comp.pstex_t}}\\
% \text{Figure 1 - Simple graphs of type $i)$, $ii)$ and $iii)$} \\
% \end{center}
By definition, $\Gamma$ has exactly one simple component of type $iii)$.

Let us consider a simple graph ${}_{\Rsh}\Gamma$, resp.\ $\Gamma_{\Lsh}$, of type $iii)$ with exactly one edge connecting $n+1$, resp.\ $n+2$, to the root: then, borrowing previous notation, we may define
\begin{align}
\label{eq-AT-1}\Omega_{1,{}_{\Rsh}\Gamma}^\pi(f_1\otimes \xi,f_2)=\widehat\varpi_{{}_{\Rsh}\Gamma}\left((\xi\otimes 1\otimes 1)\circ(\mu_n\otimes 1\otimes 1)\circ \tau_\Gamma\right)(\underset{n}{\underbrace{\pi\otimes\cdots\otimes\pi}}\otimes f_1\otimes f_2),\\
\label{eq-AT-2}\Omega_{2,\Gamma_\Lsh}^\pi(f_1,f_2\otimes\xi)=\widehat\varpi_{\Gamma_\Lsh}\left((\xi\otimes 1\otimes 1)\circ(\mu_n\otimes 1\otimes 1)\circ \tau_\Gamma\right)(\underset{n}{\underbrace{\pi\otimes\cdots\otimes\pi}}\otimes f_1\otimes f_2),
\end{align}
where $f_i$ in $A$, $i=1,2$, $\xi$ in $\mathfrak g^*$, and $\Gamma$ is the rooted, bivalent tree obtained from ${}_\Rsh\Gamma$ or $\Gamma_\Lsh$ by removing the edge from $n+1$ or $n+2$ to the root.

Observe that $\widehat\varpi_{{}_\Rsh\Gamma}$ and $\widehat\varpi_{\Gamma_\Lsh}$ are well-defined, smooth $1$-forms on $C_{2,0}^+$.
Further, since $\Gamma$ is a rooted, bivalent tree, $(\mu_n\otimes 1\otimes 1)\circ\tau_\Gamma$ is a linear map from $A^{\otimes 2}$ to $\mathfrak g\otimes A^{\otimes 2}$: hence, contraction of $\mathfrak g$ with $\mathfrak g^*$ yields an endomorphism of $A^{\otimes 2}$ consisting of differential operators with constant coefficients (and possibly infinite order).
Summing up over all simple graphs of type $iii)$~\eqref{eq-AT-1} and~\eqref{eq-AT-2} we obtain well-defined, smooth $1$-forms $\Omega_i^\pi$, $i=1,2$, on $C_{2,0}^+$ with values in $\mathfrak g\otimes\widehat{\mathrm S}(\mathfrak g^*)^{\otimes 2}$, where we identify $\widehat{\mathrm S}(\mathfrak g^*)$ with the algebra of differential operators on $A$ with constant coefficients.

On the other hand, the sum over all simple graphs of type $i)$ and $ii)$ yield the bidifferential operator $\mathcal T_\pi(\bullet,\bullet)$ by the arguments of~\cite[Subsubsections 3.1.2-3.1.4]{AST}.
(We will come back to the simple graphs of type $i)$ and $ii)$ in Subsection~\ref{ss-2-4} about the KV conjecture, where their relevance will be clearer.)

Therefore, Fubini's Theorem and the decomposition of admissible graphs into simple components of type $i)$, $ii)$ and $iii)$ yield~\eqref{eq-AT-cup}. 
\end{proof}
The $0$-form $\mathcal T^\pi$ and the $1$-forms $\Omega_i^\pi$, $i=1,2$, are smooth on $C_{2,0}^+$ and extend to the class $L^1$ when restricted on piecewise differentiable curves on $\mathcal C_{2,0}^+$.

Now, let us evaluate $\mathcal T^\pi(f_1,f_2)$ at a point in the boundary stratum $\mathcal C_2=S^1$ of $\mathcal C_{2,0}^+$, corresponding to the situation, where the two distinct points in $\mathbb H^+$ collapse together along a prescribed direction: the skew-symmetry of $\pi$ eliminates all contributions coming from simple graphs of type $i)$ and of type $ii)$, where at least one rooted, bivalent tree is attached to a wheel-like graph. 
The only possibly non-trivial contributions come from wheel-like graphs with the spokes pointing inwards (the two leaves have collapsed to a single point in $\mathbb H^+$, which we may fix to $i$): the corresponding integral weights vanish by the famous result of~\cite{Sh}.
The only non-trivial contribution comes from the unique graph in $\mathcal G_{2,0}$ with no edges.

Let us evaluate $\mathcal T^\pi(f_1,f_2)$ at the boundary stratum $\mathcal C_{0,2}^+=\{0,1\}$ of codimension $2$ of $\mathcal C_{2,0}^+$, which corresponds to the approach of the two distinct points in $\mathbb H^+$ to $0$ and $1$ on $\mathbb R$: resorting to local coordinates on $\mathcal C_{2,0}^+$ near the said boundary stratum and recalling the projection $\pi_{n,2}$, the corresponding integral weights factorize as $\widehat{\varpi}_\Gamma=\varpi_{\Gamma_1}\varpi_{\Gamma_2}\varpi_{\Gamma_3}$, where $\Gamma_1$ is in $\mathcal G_{n_1,2}$, $\Gamma_2$, $\Gamma_3$ are in $\mathcal G_{n_2,0}$ and $\mathcal G_{n_3,0}$.
Dimensional reasons and the linearity of $\pi$ force $\Gamma_2$ and $\Gamma_3$ to be wheel-like graphs with spokes pointing inwards, thus again in virtue of~\cite{Sh}, the corresponding weights are non-trivial only if $n_2=n_3=0$.

If we consider a piecewise differentiable curve $\gamma$ on $\mathcal C_{2,0}^+$ connecting the said point in $\mathcal C_2=S^1$ with $\mathcal C_{0,2}^+=\{0,1\}$ and whose interior is in $C_{2,0}^+$, we may integrate~\eqref{eq-AT-cup} along $\gamma$: the previous arguments yield
\begin{equation}\label{eq-cup-comp}
f_1\star f_2-f_1f_2=\int_\gamma \left(\mathcal T^\pi(\Omega_1^\pi([\pi,f_1],f_2))+\mathcal T^\pi(\Omega_2^\pi(f_1,[\pi,f_2]))\right),
\end{equation}
which is precisely a special case of the famous compatibility between cup products~\cite[Theorem 8.2]{K}.

\subsection{Relationship with the AT connection}\label{ss-2-3}
By their very construction, $\mathcal T^\pi$ and $\Omega_i^\pi$, $i=1,2$, extend to the completed symmetric algebra $\widehat A=\widehat{\mathrm S}(\mathfrak g)=\mathbb K[\!\![x_1,\dots,x_d]\!\!]$.
For $y_i$, $i=1,2$, in $\mathfrak g$, we consider $e^{y_i}$ in $\widehat A$: $e^{y_i}$ may be also regarded as a smooth function on $X$ {\em via} $e^{y_i}(\xi)=e^{\langle\xi,y_i\rangle}$, $\xi$ in $X$, and $\langle\bullet,\bullet\rangle$ denotes the canonical duality pairing between $\mathfrak g^*$ and $\mathfrak g$.

First of all, borrowing previous notation, let us compute the symbol of $\Omega_i^\pi$, $i=1,2$, {\em i.e.}  
\[
\Omega_1^\pi(e^{y_1}\otimes \xi,e^{y_2}),\ \Omega_1^\pi(e^{y_1},e^{y_2}\otimes\xi),\ \xi\in\mathfrak g^*.
\]
Recalling Formul\ae~\eqref{eq-AT-1},~\eqref{eq-AT-2}, a direct computation yields
\[
\Omega_1^\pi(e^{y_1}\otimes \xi,e^{y_2})=\langle \xi,\omega_1(y_1,y_2)\rangle e^{y_1}\otimes e^{y_2},\ \Omega_2^\pi(e^{y_1}\otimes \xi,e^{y_2})=\langle \xi,\omega_2(y_1,y_2)\rangle e^{y_1}\otimes e^{y_2},
\]
where $\omega_i$ denotes here the AT connection~\cite{T,AT}.
In fact, $\omega_i(y_1,y_2)$, $i=1,2$, denotes a $1$-form on $C_{2,0}^+$ with values in the formal Lie series w.r.t.\ $y_i$ in $\mathfrak g$.
In a more precise way, the AT connection $\omega_i$, $i=1,2$, is a connection $1$-form on $C_{2,0}^+$ with values in the Lie algebra $\mathfrak{tder}_2$ of tangential derivations of the degree completion of the free Lie algebra $\mathfrak{lie}_2$ with two generators~\footnote{A derivation of $\mathfrak{lie}_2$ is uniquely defined on the generators $y_1$, $y_2$: thus, a derivation $u$ of $\mathfrak{lie}_2$ is called tangential, if it obeys $u(y_i)=[y_i,u_i]$, for $u_i$ in $\mathfrak{lie}_2$, $i=1,2$.}. 

Following the same patterns, it is not difficult to prove by direct computations the following identities:
\[
\begin{aligned}
\Omega_1^\pi([\pi,e^{y_1}],e^{y_2})&=\langle[y_1,\omega_1(y_1,y_2)],\partial_{y_1}\rangle(e^{y_1})\otimes e^{y_2}+\mathrm{tr}_\mathfrak g\!\left(\mathrm{ad}(y_1)\partial_{y_1}\omega_1(y_1,y_2)\right)e^{y_1}\otimes e^{y_2},\\
\Omega_2^\pi([\pi,e^{y_1}],e^{y_2})&=e^{y_1}\otimes\langle[y_1,\omega_2(y_1,y_2)],\partial_{y_2}\rangle(e^{y_2})+\mathrm{tr}_\mathfrak g\!\left(\mathrm{ad}(y_2)\partial_{y_2}\omega_2(y_1,y_2)\right)e^{y_1}\otimes e^{y_2},
\end{aligned}
\]
where $\mathrm{tr}_\mathfrak g(\bullet)$ denotes the trace of endomorphisms of $\mathfrak g$, $\mathrm{ad}(\bullet)$ the adjoint representation of $\mathfrak g$ and $\partial_{y_1}\omega_1(y_1,y_2)$ the endomorphism of $\mathfrak g$ defined {\em via}
\[
\left(\partial_{y_1}\omega_1(y_1,y_2)\right)(x)=\frac{d}{dt}\omega_1(y_1+tx,y_2)\Big\vert_{t=0},\ x\in \mathfrak g.
\]

It is possible to re-write~\eqref{eq-cup-comp} as
\[
\begin{aligned}
e^{y_1}\star e^{y_2}-e^{y_1}e^{y_2}&=\int_\gamma\left(\mathcal T^\pi(\langle[y_1,\omega_1(y_1,y_2)],\partial_{y_1}\rangle(e^{y_1}),e^{y_2})+\mathcal T^\pi(e^{y_1},\langle[y_1,\omega_2(y_1,y_2)],\partial_{y_2}\rangle(e^{y_2}))\right)+\\
&\phantom{=}+\left(\mathrm{tr}_\mathfrak g\!\left(\mathrm{ad}(y_1)\partial_{y_1}\omega_1(y_1,y_2)\right)+\mathrm{tr}_\mathfrak g\!\left(\mathrm{ad}(y_2)\partial_{y_2}\omega_2(y_1,y_2)\right)\right)\int_\gamma\mathcal T^\pi(e^{y_1},e^{y_2})=\\
&=\int_\gamma\left(\langle[y_1,\omega_1(y_1,y_2)],\partial_{y_1}\rangle+\langle[y_2,\omega_2(y_1,y_2)],\partial_{y_2}\rangle+\mathrm{div}(\omega(y_1,y_2))\right)D_\mathrm T(y_1,y_2)e^{Z_T(y_1,y_2)},
\end{aligned}
\]
where $\langle[y_1,\omega_1(y_1,y_2)],\partial_{y_1}\rangle(e^{y_1})$ denotes the tangent vector field $[y_1,\omega_1(y_1,y_2)]$ of the adjoint type acting on $e^{y_1}$, and similarly for $\langle[y_2,\omega_2(y_1,y_2)],\partial_{y_2}\rangle(e{y_2})$, and, following notation from~\cite{AT}, 
\[
\mathrm{div}(\omega(y_1,y_2))=\mathrm{tr}_\mathfrak g\!\left(\mathrm{ad}(y_1)\partial_{y_1}\omega_1(y_1,y_2)\right)+\mathrm{tr}_\mathfrak g\!\left(\mathrm{ad}(y_2)\partial_{y_2}\omega_1(y_1,y_2)\right).
\]
Finally, by $D_\mathrm T(\bullet,\bullet)$ and $Z_\mathrm T(\bullet,\bullet)$ we denote the functions over $C_{2,0}^+$, providing deformations of the Duflo density function $D(\bullet,\bullet)$ and the Baker--Campbell--Hausdorff (shortly, BCH) formula $Z(\bullet,\bullet)$ respectively, introduced in~\cite{T}.
We will discuss the Duflo density function in the next Subsection, as well as its relationship with the product $\star$.

\subsection{Relationship with the KV conjecture}\label{ss-2-4}
The AT connection had been introduced in~\cite{T} in an attempt to solve the combinatorial KV conjecture~\cite{KV}.

Given $\mathfrak g$ as in Section~\ref{s-1}, the KV conjecture states the existence of two Lie series $F$, $G$, which are convergent in a neighborhood $U$ of $(0,0)$ in $\mathfrak g\times\mathfrak g$, which satisfy the two identities
\begin{align}
\label{eq-KV-1}y_1+y_2-\log(e^{y_2} e^{y_1})&=\left(1-e^{-\mathrm{ad}(y_1)}\right)F(y_1,y_2)+\left(e^{\mathrm{ad}(y_2)}-1\right)G(y_1,y_2),\\
\label{eq-KV-2}\mathrm{tr}_\mathfrak g(\mathrm{ad}(y_1)\partial_{y_1} F(y_1,y_2))+\mathrm{tr}_\mathfrak g(\mathrm{ad}(y_2)\partial_{y_2} G(y_1,y_2))&=\frac{1}2\mathrm{tr}_\mathfrak g\!\left(\frac{\mathrm{ad}(y_1)}{e^{\mathrm{ad}(y_1)}-1}+\frac{\mathrm{ad}(y_2)}{e^{\mathrm{ad}(y_2)}-1}-\frac{\mathrm{ad}(Z(y_1,y_2))}{e^{\mathrm{ad}(Z(y_1,y_2))}-1}-1\right),
\end{align}
for $(y_1,y_2)$ in $U$, such that the BCH Lie series $Z(y_1,y_2)=\log(e^{y_1}e^{y_2})$ converges. 

We recall from~\cite{CFR} the relationship among the product $\star$ and the product in $\mathrm U(\mathfrak g)$,
\begin{equation}\label{eq-star-UEA}
\mathcal I(f_1\star f_2)=\mathcal I(f_1)\cdot \mathcal I(f_2),\ f_i\in A,\ i=1,2,
\end{equation}
where $\mathcal I$ is the isomorphism (of vector spaces) from $A$ to $\mathrm U(\mathfrak g)$ given by post-composing the PBW isomorphism from $A$ to $\mathrm U(\mathfrak g)$ with the automorphism of $A$ associated to the Duflo function in $\widehat{\mathrm S}(\mathfrak g^*)$,
\[
\sqrt{j(x)}=\sqrt{\mathrm{det}_\mathfrak g\!\left(\frac{1-e^{-\mathrm{ad}(x)}}{\mathrm{ad}(x)}\right)},\ x\in\mathfrak g.
\]

As a corollary of~\eqref{eq-star-UEA}, we have the identity
\begin{equation}\label{eq-star-UEA-1}
e^{y_1}\star e^{y_2}=D(y_1,y_2)e^{Z(y_1,y_2)},\ D(y_1,y_2)=\frac{\sqrt{j(y_1)}\sqrt{j(y_2)}}{\sqrt{j(Z(y_1,y_2))}},\ y_i\in \mathfrak g,\ i=1,2.
\end{equation}
We observe that~\eqref{eq-star-UEA-1} has been proved by different methods in~\cite{Kath} and~\cite[Subsubsections 3.1.2-3.1.4]{AST}.
\begin{Rem}\label{r-star-UEA}
More precisely, in~\cite[]{Kath}, it had been proved that the simple graphs of type $i)$ contribute to the BCH Lie series $Z(\bullet,\bullet)$, while in~\cite[]{AST}, recalling also~\cite{Sh}, it had been proved that the simple graphs of type $ii)$ contribute to the density function $D(\bullet,\bullet)$. 
\end{Rem}
Let us replace in~\eqref{eq-star-UEA-1} $\pi$ by $t\pi$, for $t$ in the unit interval: we write $\star_t$ for the corresponding product, whence
\[
e^{y_1}\star_t e^{y_2}=D(ty_1,ty_2) e^{Z_t(y_1,y_2)},\ Z_t(y_1,y_2)=\frac{Z(ty_1,ty_2)}t.
\]
It follows directly from~\eqref{eq-star} that $e^{y_1}\star_1 e^{y_2}=e^{y_1}\star e^{y_2}$ and $e^{y_1}\star_0e^{y_2}=e^{y_1}e^{y_2}$, $y_i$ in $\mathfrak g$.

Let us compute the derivative w.r.t.\ $t$ of both sides of~\eqref{eq-star-UEA-1}.

Identity~\eqref{eq-KV-1} implies that (see e.g.~\cite[Lemma 3.2]{KV}) 
\[
\frac{d}{dt}Z_t(y_1,y_2)=\left(\left\langle[y_1,F_t(y_1,y_2)],\partial_{y_1}\right\rangle+\left\langle[y_2,G_t(y_1,y_2)],\partial_{y_2}\right\rangle\right)Z_t(y_1,y_2),
\]
where $F_t(y_1,y_2)=F(ty_1,ty_2)/t$ and similarly for $G_t(y_1,y_2)$.

On the other hand, combining~\cite[Lemma 3.2]{KV} with~\cite[Lemma 3.3]{KV} and observing that $\sqrt{j(\bullet)}$ is $\mathfrak g$-invariant, we get
\[
\begin{aligned}
\frac{d}{dt}D(ty_1,ty_2)&=\frac{1}{2t}\mathrm{tr}_\mathfrak g\!\left(\frac{\mathrm{ad}(ty_1)}{e^{\mathrm{ad}(ty_1)}-1}+\frac{\mathrm{ad}(ty_2)}{e^{\mathrm{ad}(ty_2)}-1}-\frac{\mathrm{ad}(tZ_t(y_1,y_2))}{e^{\mathrm{ad}(tZ_t(y_1,y_2))}-1}-1\right)D(ty_1,ty_2)+\\
&\phantom{=}+\left(\left\langle[y_1,F_t(y_1,y_2)],\partial_{y_1}\right\rangle+\left\langle[y_2,G_t(y_1,y_2)],\partial_{y_2}\right\rangle\right)D(ty_1,ty_2)=\\
&=\mathrm{tr}_\mathfrak g(\mathrm{ad}(y_1)\partial_{y_1} F_t(y_1,y_2))+\mathrm{tr}_\mathfrak g(\mathrm{ad}(y_2)\partial_{y_2} G_t(y_1,y_2))D(ty_1,ty_2)+\\
&\phantom{=}+\left(\left\langle[y_1,F_t(y_1,y_2)],\partial_{y_1}\right\rangle+\left\langle[y_2,G_t(y_1,y_2)],\partial_{y_2}\right\rangle\right)D(ty_1,ty_2),
\end{aligned}
\]
where the second equality is a consequence of~\eqref{eq-KV-2}.

Combining both previous results, we get
\[
\begin{aligned}
\frac{d}{dt}(e^{y_1}\star_t e^{y_2})&=\mathrm{tr}_\mathfrak g(\mathrm{ad}(y_1)\partial_{y_1} F_t(y_1,y_2))+\mathrm{tr}_\mathfrak g(\mathrm{ad}(y_2)\partial_{y_2} G_t(y_1,y_2))D(ty_1,ty_2)(e^{y_1}\star_t e^{y_2})+\\
&\phantom{=}+\left(\left\langle[y_1,F_t(y_1,y_2)],\partial_{y_1}\right\rangle+\left\langle[y_2,G_t(y_1,y_2)],\partial_{y_2}\right\rangle\right)(e^{y_1}\star_t e^{y_2})=\\
&=\left(\left\langle[y_1,F_t(y_1,y_2)],\partial_{y_1}\right\rangle+\mathrm{tr}_\mathfrak g(\mathrm{ad}(y_1)\partial_{y_1} F_t(y_1,y_2))\right)(e^{y_1})\star_t e^{y_2}+\\
&\phantom{=}+e^{y_1}\star_t\left(\left\langle[y_2,G_t(y_1,y_2)],\partial_{y_1}\right\rangle+\mathrm{tr}_\mathfrak g(\mathrm{ad}(y_1)\partial_{y_1} G_t(y_1,y_2))\right)(e^{y_2}).
\end{aligned}
\]
Recalling the computations at the beginning of Subsection~\ref{ss-2-3}, it is not difficult to verify that to the Lie series $F_t$, $G_t$, one may associate two smooth $1$-forms $\Omega_i^\mathrm{KV}$, $i=1,2$, on the unit interval with values in $\mathfrak g\otimes\widehat{\mathrm S}(\mathfrak g^*)$, such that the following identities hold true:
\[
\begin{aligned}
\Omega_1^\mathrm{KV}([\pi,e^{y_1}],e^{y_2})&=\langle[y_1,F_t(y_1,y_2)dt],\partial_{y_1}\rangle(e^{y_1})\otimes e^{y_2}+\mathrm{tr}_\mathfrak g\!\left(\mathrm{ad}(y_1)\partial_{y_1}(F_t(y_1,y_2)dt)\right)e^{y_1}\otimes e^{y_2},\\
\Omega_2^\mathrm{KV}([\pi,e^{y_1}],e^{y_2})&=e^{y_1}\otimes\langle[y_2,G_t(y_1,y_2)dt],\partial_{y_2}\rangle(e^{y_2})+\mathrm{tr}_\mathfrak g\!\left(\mathrm{ad}(y_2)\partial_{y_2}(G_t(y_1,y_2)dt)\right)e^{y_1}\otimes e^{y_2},
\end{aligned}
\]
whence, denoting by $\mathcal T_t(\bullet,\bullet)$ the $t$-dependent bidifferential operator of infinite order $\mathcal T_t(f_1,f_2)=f_1\star_t f_2$, $f_i$ in $A$, we find the homotopy formula 
\begin{equation}\label{eq-KV-cup}
f_1\star f_2-f_1f_2=\int_0^1 \left(\mathcal T_t(\Omega_1^\mathrm{KV}([\pi,f_1],f_2))+\mathcal T_t(\Omega_2^\mathrm{KV}(f_2,[\pi,f_1]))\right),
\end{equation}
which is similar in its structure to the homotopy formula~\eqref{eq-cup-comp} obtained by deforming the product $\star$ on $C_{2,0}^+$.

% We finally observe that both~\eqref{eq-cup-comp} and~\eqref{eq-KV-cup} imply that 
% \[
% f_1\star f_2=f_1f_2,\ f_i\in \mathrm S(\mathfrak g)^\mathfrak g,\ i=1,2.
% \]

\begin{bibdiv}
\begin{biblist}

\bib{AT}{article}{
   author={Alekseev, Anton},
   author={Torossian, Charles},
   title={Kontsevich deformation quantization and flat connections},
   journal={Comm. Math. Phys.},
   volume={300},
   date={2010},
   number={1},
   pages={47--64},
   issn={0010-3616},
   review={\MR{2725182 (2011j:53181)}},
   doi={10.1007/s00220-010-1106-8},
}

\bib{Alm}{article}{
  author={Alm, Johan},
  title={Two-colored noncommutative Gerstenhaber formality and infinity Duflo isomorphism},
  eprint={arXiv:1104.2194},
  date={2011}
}

\bib{AST}{article}{
   author={Andler, Martin},
   author={Sahi, Siddhartha},
   author={Torossian, Charles},
   title={Convolution of invariant distributions: proof of the
   Kashiwara-Vergne conjecture},
   journal={Lett. Math. Phys.},
   volume={69},
   date={2004},
   pages={177--203},
   issn={0377-9017},
   review={\MR{2104443 (2005j:53108)}},
   doi={10.1007/s11005-004-0979-x},
}

\bib{CFR}{article}{
   author={Calaque, Damien},
   author={Felder, Giovanni},
   author={Rossi, Carlo A.},
   title={Deformation quantization with generators and relations},
   journal={J. Algebra},
   volume={337},
   date={2011},
   pages={1--12},
   issn={0021-8693},
   review={\MR{2796061}},
   doi={10.1016/j.jalgebra.2011.03.037},
}

\bib{KV}{article}{
   author={Kashiwara, Masaki},
   author={Vergne, Mich{\`e}le},
   title={The Campbell-Hausdorff formula and invariant hyperfunctions},
   journal={Invent. Math.},
   volume={47},
   date={1978},
   number={3},
   pages={249--272},
   issn={0020-9910},
   review={\MR{0492078 (58 \#11232)}},
}

\bib{Kath}{article}{
   author={Kathotia, Vinay},
   title={Kontsevich's universal formula for deformation quantization and
   the Campbell-Baker-Hausdorff formula},
   journal={Internat. J. Math.},
   volume={11},
   date={2000},
   number={4},
   pages={523--551},
   issn={0129-167X},
   review={\MR{1768172 (2002h:53154)}},
   doi={10.1142/S0129167X0000026X},
}

\bib{K}{article}{
   author={Kontsevich, Maxim},
   title={Deformation quantization of Poisson manifolds},
   journal={Lett. Math. Phys.},
   volume={66},
   date={2003},
   number={3},
   pages={157--216},
   issn={0377-9017},
   review={\MR{2062626 (2005i:53122)}},
}

\bib{M}{article}{
   author={Mochizuki, Takuro},
   title={On the morphism of Duflo-Kirillov type},
   journal={J. Geom. Phys.},
   volume={41},
   date={2002},
   number={1-2},
   pages={73--113},
   issn={0393-0440},
   review={\MR{1872382 (2002m:53135)}},
   doi={10.1016/S0393-0440(01)00049-3},
}

\bib{Sh}{article}{
   author={Shoikhet, Boris},
   title={Vanishing of the Kontsevich integrals of the wheels},
   note={EuroConf\'erence Mosh\'e Flato 2000, Part II (Dijon)},
   journal={Lett. Math. Phys.},
   volume={56},
   date={2001},
   number={2},
   pages={141--149},
   issn={0377-9017},
   review={\MR{1854132 (2002j:53119)}},
   doi={10.1023/A:1010842705836},
}

\bib{T}{article}{
   author={Torossian, Charles},
   title={Sur la conjecture combinatoire de Kashiwara-Vergne},
   language={French, with English summary},
   journal={J. Lie Theory},
   volume={12},
   date={2002},
   number={2},
   pages={597--616},
   issn={0949-5932},
   review={\MR{1923789 (2004c:17023)}},
}

\end{biblist}
\end{bibdiv}

\end{document}